\numberwithin{equation}{section}
\theoremstyle{plain}
\newtheorem{theorem}{Theorem}[section]
\newtheorem{proposition}[theorem]{Proposition}
\newtheorem{corollary}[theorem]{Corollary}
\theoremstyle{definition}
\newtheorem{definition}[theorem]{Definition}
\newtheorem{example}[theorem]{Example}
\newtheorem{question}[theorem]{Question}
\begin{document}

\title[Jordan Derivations and Antiderivations]
{Jordan Derivations and Antiderivations of Generalized Matrix
Algebras}

\author{Yanbo Li, Leon van Wyk and Feng Wei}

\address{Li: Department of Information and Computing Sciences, Northeastern
University at Qinhuangdao, Qinhuangdao, 066004, P.R. China}

\email{liyanbo707@gmail.com}

\address{van Wyk: Department of Mathematics, Stellenbosch University,
 Private Bag X1, Matieland 7602, Stellenbosch, South Africa}

\email{LvW@sun.ac.za}

\address{Wei: School of Mathematics, Beijing Institute of
Technology, Beijing, 100081, P. R. China}

\email{daoshuo@hotmail.com}

\begin{abstract}
Let $\mathcal{G}=
\left[\smallmatrix A & M\\
N & B \endsmallmatrix \right]$ be a generalized matrix algebra
defined by the Morita context $(A, B, _AM_B, _BN_A, \Phi_{MN},
\Psi_{NM})$. In this article we mainly study the question of whether
there exist proper Jordan derivations for the generalized matrix
algebra $\mathcal{G}$. It is shown that if one of the bilinear
pairings $\Phi_{MN}$ and $\Psi_{NM}$ is nondegenerate, then every
antiderivation of $\mathcal{G}$ is zero. Furthermore, if the
bilinear pairings $\Phi_{MN}$ and $\Psi_{NM}$ are both zero, then
every Jordan derivation of $\mathcal{G}$ is the sum of a derivation
and an antiderivation. Several constructive examples and
counterexamples are presented.
\end{abstract}

\subjclass[2000]{15A78, 16W25, 47L35}

\keywords{Generalized matrix algebra, Jordan derivation,
antiderivation}

\thanks{This work is partially supported by the
National Natural Science Foundation of China (Grant No. 10871023).}
\date{September 13, 2011}

\maketitle
\section{Introduction}
\label{xxsec1}

Let us begin with the definition of generalized matrix algebras
given by a Morita context. Let $\mathcal{R}$ be a commutative ring
with identity. A Morita context consists of two
$\mathcal{R}$-algebras $A$ and $B$, two bimodules $_AM_B$ and
$_BN_A$, and two bimodule homomorphisms called the pairings
$\Phi_{MN}: M\underset {B}{\otimes} N\longrightarrow A$ and
$\Psi_{NM}: N\underset {A}{\otimes} M\longrightarrow B$ satisfying
the following commutative diagrams:
$$
\xymatrix{ M \underset {B}{\otimes} N \underset{A}{\otimes} M
\ar[rr]^{\hspace{8pt}\Phi_{MN} \otimes I_M} \ar[dd]^{I_M \otimes
\Psi_{NM}} && A
\underset{A}{\otimes} M \ar[dd]^{\cong} \\  &&\\
M \underset{B}{\otimes} B \ar[rr]^{\hspace{10pt}\cong} && M }
\hspace{4pt}{\rm and}\hspace{4pt} \xymatrix{ N \underset
{A}{\otimes} M \underset{B}{\otimes} N
\ar[rr]^{\hspace{8pt}\Psi_{NM}\otimes I_N} \ar[dd]^{I_N\otimes
\Phi_{MN}} && B
\underset{B}{\otimes} N \ar[dd]^{\cong}\\  &&\\
N \underset{A}{\otimes} A \ar[rr]^{\hspace{10pt}\cong} && N
\hspace{2pt}.}
$$
Let us write this Morita context as $(A, B, _AM_B, _BN_A, \Phi_{MN},
\Psi_{NM})$. If $(A, B, _AM_B,$ $ _BN_A,$ $ \Phi_{MN}, \Psi_{NM})$
is a Morita context, then the set
$$
\left[
\begin{array}
[c]{cc}%
A & M\\
N & B\\
\end{array}
\right]=\left\{ \left[
\begin{array}
[c]{cc}%
a& m\\
n & b\\
\end{array}
\right] \vline a\in A, m\in M, n\in N, b\in B \right\}
$$
form an $\mathcal{R}$-algebra under matrix-like addition and
matrix-like multiplication, where We assume that at least one of the
two bimodules $M$ and $N$ is distinct from zero. Such an
$\mathcal{R}$-algebra is called a \textit{generalized matrix
algebra} of order 2 and is usually denoted by $\mathcal{G}=
\left[\smallmatrix A & M\\
N & B \endsmallmatrix \right]$. This kind of algebra was first
introduced by Morita in \cite{Morita}, where the author investigated
Morita duality theory of modules and its applications to Artinian
algebras.

Let $\mathcal{R}$ be a commutative ring with identity, $A$ be a
unital algebra over $\mathcal{R}$ and $\mathcal{Z}(A)$ be the center
of $A$. Recall that an $\mathcal{R}$-linear map $\Theta_{\rm d}$
from $A$ into itself is called a \textit{derivation} if $\Theta_{\rm
d}(ab)=\Theta_{\rm d}(a)b+a\Theta_{\rm d}(b)$ for all $a, b\in A$.
Further, an $\mathcal{R}$-linear map $\Theta_{\rm Jord}$ from $A$
into itself is called a \textit{Jordan derivation} if $\Theta_{\rm
Jord}(a^2)=\Theta_{\rm Jord}(a)a+a\Theta_{\rm Jord}(a)$ for all
$a\in A$. Every derivation is obviously a Jordan derivation. The
inverse statement is not true in general. Those Jordan derivations
which are not derivations are said to be \textit{proper}. An
$\mathcal{R}$-linear map $\Theta_{\rm antid}$ from $A$ into itself
is called an \textit{antiderivation} if $\Theta_{\rm
antid}(ab)=\Theta_{\rm antid}(b)a+b\Theta_{\rm antid}(a)$ for all
$a, b\in A$.

In 1957 Herstein \cite{Herstein} proved that every Jordan derivation
from a prime ring of characteristic not $2$ into itself is a
derivation. This result has been generalized to different rings and
algebras in various directions (see e.g. \cite{Benkovic, Bresar1,
Bresar2, Bresar4, HanWei, HouQi, LiBenkovic, XiaoWei1, ZhangYu} and
references therein). Zhang and Yu \cite{ZhangYu} showed that every
Jordan derivation on a triangular algebra is a derivation. Xiao and
Wei \cite{XiaoWei1} extended this result to the higher case and
obtained that any Jordan higher derivation on a triangular algebra
is a higher derivation. Johnson \cite{Johnson} considered a more
challenging question for which Banach algebras $A$ there are no
proper Jordan derivations from $A$ into an arbitrary Banach
$A$-bimodule $M$. It turned out that this is true for some important
classes of algebras (in particular, for the algebra of all $n\times
n$ complex matrices). Motivated by Johnson's work, Benkovic
investigated the structure of Jordan derivations from the upper
triangular matrix algebra $\mathcal{T}_n(\mathcal{R})$ into its
bimodule and proved that every Jordan derivation from
$\mathcal{T}_n(\mathcal{R})$ into its bimodule is the sum of a
derivation and an antiderivation. Recently, Li, Xiao and Wei
\cite{LiXiao, LiWei, XiaoWei2} jointly studied linear maps of
generalized matrix algebras, such as derivations, Lie derivations,
commuting maps and semicentralizing maps. Our main purpose is to
develop the theory of linear maps of triangular algebras to the case
of generalized matrix algebras, which has a much broader background.
People pay much less attention to linear maps of generalized matrix
algebras, to the best of our knowledge there are fewer articles
dealing with linear maps of generalized matrix algebras except for
\cite{BobocDascalescuvanWyk, LiXiao, LiWei, XiaoWei2}.

The problem that we address in this article is to study whether
there exist proper Jordan derivations for generalized matrix
algebras. The outline of this article is as follows. The second
section presents two basic examples of generalized matrix algebras
which we will revisit later. In the third section we describe the
general form of Jordan derivations and antiderivations on
generalized matrix algebras. We observe that any antiderivation on a
class of generalized matrix algebra is zero (see Proposition
\ref{xxsec3.10}). Furthermore, it is shown that every Jordan
derivation on another class of generalized matrix algebras is the
sum of a derivation and an antiderivation (see Theorem
\ref{xxsec3.11}).

\section{Examples of Generalized Matrix Algebras}\label{xxsec2}

We have presented many examples of generalized matrix algebras in
\cite{LiWei}, such as standard generalized matrix algebras and
quasi-hereditary algebras, generalized matrix algebras of order $n$,
inflated algebras, upper and lower triangular matrix algebras, block
upper and lower triangular matrix algebras, nest algebras. For later
discussion convenience, we have to give another two new generalized
matrix algebras.

\subsection{Generalized matrix algebras from smash product algebras}
\label{xxsec2.1}

Let $H$ be a finite dimensional Hopf algebra over a field
$\mathbb{K}$ with comultiplication $\Delta: H\longrightarrow
H\bigotimes H$, counit $\varepsilon: H\longrightarrow \mathbb{K}$
and antipode $S: H\longrightarrow H$. Clearly, $S$ is bijective.
Moreover, the space of left integrals $\int_H l=\left\{x\in
H|hx=\varepsilon(h)x, \forall h\in H\right\}$ is one-dimensional. We
substitute the ``sigma notation" for $\Delta$ in the present
article. Now assume that $A$ is an $H$-module algebra, that is, $A$
is a $\mathbb{K}$-algebra which is a left $H$-module, such that
\begin{enumerate}
\item[(1)] $h\cdot(ab)=\underset{(h)}{\sum}(h_1\cdot a) (h_2\cdot b)$
and

\item[(2)] $h\cdot 1_A=\varepsilon(h)1_A$.
\end{enumerate}
for all $h\in H, a, b\in A$. Then the \textit{smash product algebra}
$A\# H$ is defined as follows, for any $a, b\in A, h, k\in H$:
\begin{enumerate}
\item[(1)] as a $\mathbb{K}$-space, $A\#H=A\otimes H$.
We write $a\#h$ for the element $a\otimes h$

\item[(2)] multiplication is given by $(a\#h)(b\#k)=\underset{(h)}{\sum} a(h_1\cdot b)\#h_2k$.
\end{enumerate}
The \textit{invariants subalgebra} of $H$ on $A$ is the set
$A^H=\{x\in A|h\cdot x=\varepsilon(h)x, \forall h\in H\}$. $A$ is a
left $A\#H$-module in the standard way, that is
$$
a\#h\rightarrow b=a(h\cdot b)
$$
for all $a, b\in A$ and $h\in H$. For a given $t\in \int l$, then
$th\in \int l$ for all $h\in H$. Since $\int l$ is one-dimensional,
there exists $\alpha\in H^*$ such that $th=\alpha(h)t$ for all $h\in
H$. It is easy to see that $\alpha$ is multiplicative, and it is a
group-like element of $H^*$. Hence
$$
h^\alpha=\alpha\rightarrow h=\underset{(h)}{\sum}\alpha(h_2)h_1,
\quad \forall h\in H
$$
defines an automorphism on $H$. Thus $A$ is a right $A\#H$-module
via
$$
a\leftarrow b\#h=S^{-1}h^\alpha\cdot (ab), \quad \forall a\in A,
b\#h\in A\#H.
$$

The close relationship between $A\# H$ and $A^H$ enables us to
formalize the following generalized matrix algebra. Now $A$ is a
left (or right) $A^H$-module simply by left (or right)
multiplication. Simultaneously, $A$ is also a left (or right)
$A\#H$-module. Thus $M=_{A^H}A_{A\#H}$ and $N=_{A\#H}A_{A^H}$,
together with the maps
$$
\begin{aligned}
\Psi_{NM}: A\otimes_{A^H} A &
\longrightarrow A\#H \hspace{3pt} {\rm defined \hspace{3pt} by}\hspace{3pt} & \Psi_{NM}(a,b)&=(a\#t)(b\#1)\\
\Phi_{MN}: A\otimes_{A\#H} A^H & \longrightarrow A^H \hspace{3pt}
{\rm defined \hspace{3pt} by}\hspace{3pt} & \Phi_{NM}(a,b)&=t\cdot
(ab)
\end{aligned}
$$
give rise to a new generalized matrix algebra
$$
\mathcal{G}_{\rm SPA}=\left[
\begin{array}
[c]{cc}%
A^H & M\\
N & A\#H\\
\end{array}
\right].
$$
We refer the reader to \cite{Montgomery} about the basic properties
of $\mathcal{G}_{\rm SPA}$.

\subsection{Generalized matrix algebras from group algebras}
\label{xxsec2.2}

Let $A$ be an associative algebra over a field $\mathbb{K}$ and $G$
be a finite group of automorphisms acting on $A$. The \textit{fixed
ring} $A^G$ of the action $G$ on $A$ is the set $\left\{a\in
A|a^g=a, \forall g\in G\right\}$. The \textit{skew group algebra}
$A*G$ is the set of all formal sums $\sum _{g\in G}a_g g,
\hspace{2pt} a_g\in A$. The addition operation is componentwise and
the multiplication operation is defined distributively by the
formula
$$
ag\cdot bh=ab^{g^{-1}} gh
$$
for all $a, b\in A$ and $g, h\in G$. Clearly, $A$ is a left and
right $A^G$-module. $A$ can also be viewed as a left or right
$A*G$-module as follows: for any $x=\sum_{g\in G}a_gg\in A*G$ and
$a\in A$, we define $ x\cdot a=\sum_{g\in G} a_ga^{g^{-1}} $ and $
a\cdot x=\sum_{g\in G} (aa_g)^g. $ Then we obtain a generalized
matrix algebra
$$
\mathcal{G}_{\rm GA}=\left[
\begin{array}
[c]{cc}%
A^G & M\\
N & A*G\\
\end{array}
\right],
$$
where $M=_{A^G}A_{A*G}$ and $N=_{A*G}A_{A^G}$. The bilinear pairings
$\Phi_{MN}$ and $\Psi_{NM}$ can be established via
$$
\begin{aligned}
\Phi_{MN}: A\otimes_{A*G} A & \longrightarrow A^G \\
(x, y) &\longmapsto \sum_{g\in G}(xy)^g
\end{aligned}
$$
and
$$
\begin{aligned}
\Psi_{NM}: A\otimes_{A^G} A & \longrightarrow A*G\\
(x, y)& \longmapsto \sum_{g\in G}xy^{g^{-1}}g.
\end{aligned}
$$

\section{Jordan Derivations of Generalized Matrix Algebras}
\label{xxsec3}

Let $\mathcal{G}$ be a generalized matrix algebra of order $2$ based
on the Morita context $(A, B, _AM_B, _BN_A, \Phi_{MN}, \Psi_{NM})$
and let us denote it by
$$
\mathcal{G}:=\left[
\begin{array}
[c]{cc}%
A & M\\
N & B
\end{array}
\right].
$$
Here, at least one of the two bimodules $M$ and $N$ is distinct from
zero. The main aim of this section is to show that any Jordan
derivation on a class of generalized matrix algebras is the sum of a
derivation and an antiderivation. Our motivation originates from the
following several results. Benkovic \cite{Benkovic} proved that
every Jordan derivation from the algebra of all upper triangular
matrices into its bimodule is the sum of a derivation and an
antiderivation. Ma and Ji \cite{MaJi} extended this result to the
case of generalized Jordan derivations and obtained that every
generalized Jordan derivation from the algebra of all upper
triangular matrices into its bimodule is the sum of a generalized
derivation and an antiderivation. Zhang and Yu in \cite{ZhangYu}
showed that every Jordan derivation on a triangular algebra is a
derivation. Therefore, it is appropriate to describe and
characterize Jordan derivations of $\mathcal{G}$. Note that the
forms of derivations and Lie derivations of $\mathcal{G}$ were given
in \cite{LiWei}.

\begin{proposition}\cite[Proposition 4.2]{LiWei}\label{xxsec3.1}
An additive map $\Theta_{\rm d}$ from $\mathcal{G}$ into itself is a
derivation if and only if it has the form
$$
\begin{aligned}
& \Theta_{\rm d}\left(\left[
\begin{array}
[c]{cc}%
a & m\\
n & b\\
\end{array}
\right]\right) \\
=& \left[
\begin{array}
[c]{cc}%
\delta_1(a)-mn_0-m_0n & am_0-m_0b+\tau_2(m)\\
n_0a-bn_0+\nu_3(n) & n_0m+nm_0+\mu_4(b)\\
\end{array}
\right] , (\bigstar1)\\
& \forall \left[
\begin{array}
[c]{cc}%
a & m\\
n & b\\
\end{array}
\right]\in \mathcal{G},
\end{aligned}
$$
where $m_0\in M, n_0\in N$ and
$$
\begin{aligned} \delta_1:& A \longrightarrow A, &
 \tau_2: & M\longrightarrow M, &  \nu_3: & N\longrightarrow N , &
\mu_4: & B\longrightarrow B
\end{aligned}
$$
are all $\mathcal{R}$-linear maps satisfying the following
conditions:
\begin{enumerate}
\item[(1)] $\delta_1$ is a derivation of $A$ with
$\delta_1(mn)=\tau_2(m)n+m\nu_3(n);$

\item[(2)] $\mu_4$ is a derivation of $B$ with
$\mu_4(nm)=n\tau_2(m)+\nu_3(n)m;$

\item[(3)] $\tau_2(am)=a\tau_{2}(m)+\delta_1(a)m$ and
$\tau_2(mb)=\tau_2(m)b+m\mu_4(b);$

\item[(4)] $\nu_3(na)=\nu_3(n)a+n\delta_1(a)$ and
$\nu_3(bn)=b\nu_3(n)+\mu_4(b)n.$
\end{enumerate}
\end{proposition}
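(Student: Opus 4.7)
The plan is to establish the forward direction via a Peirce decomposition argument with respect to the two natural idempotents of $\mathcal{G}$, and to verify the converse by direct substitution. Set $e_1 = \left[\smallmatrix 1_A & 0\\ 0 & 0\endsmallmatrix\right]$ and $e_2 = \left[\smallmatrix 0 & 0\\ 0 & 1_B\endsmallmatrix\right]$, so that $1_{\mathcal{G}} = e_1 + e_2$ and every element of $\mathcal{G}$ decomposes uniquely as $e_1 a e_1 + e_1 m e_2 + e_2 n e_1 + e_2 b e_2$ with $a\in A$, $m\in M$, $n\in N$, $b\in B$.

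Assuming $\Theta_{\rm d}$ is a derivation, I would first extract $m_0$ and $n_0$ from $\Theta_{\rm d}(e_1)$. From $\Theta_{\rm d}(1_{\mathcal{G}})=0$ and $\Theta_{\rm d}(e_1) = \Theta_{\rm d}(e_1^2) = \Theta_{\rm d}(e_1) e_1 + e_1 \Theta_{\rm d}(e_1)$, the two diagonal Peirce components of $\Theta_{\rm d}(e_1)$ vanish, so $\Theta_{\rm d}(e_1) = \left[\smallmatrix 0 & m_0\\ n_0 & 0\endsmallmatrix\right]$ for uniquely determined $m_0\in M$, $n_0\in N$, and $\Theta_{\rm d}(e_2) = -\Theta_{\rm d}(e_1)$. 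I would then expand $\Theta_{\rm d}(e_1 a e_1)$ via the derivation rule applied to both factorizations $e_1 \cdot (e_1 a e_1)$ and $(e_1 a e_1)\cdot e_1$: this forces the $(1,2)$ and $(2,1)$ Peirce coordinates to equal $a m_0$ and $n_0 a$ respectively, the $(2,2)$ coordinate to vanish, and leaves one genuinely free diagonal component $\delta_1(a) := (\Theta_{\rm d}(e_1 a e_1))_{11}$. An analogous analysis of $\Theta_{\rm d}(e_2 b e_2)$, $\Theta_{\rm d}(e_1 m e_2)$ and $\Theta_{\rm d}(e_2 n e_1)$ produces $\mu_4$, $\tau_2$, $\nu_3$ together with the remaining cross terms, assembling the formula $(\bigstar 1)$. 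Conditions (1)--(4) then arise by applying $\Theta_{\rm d}$ to the mixed Peirce products $(e_1 a e_1)(e_1 a' e_1)$, $(e_2 n e_1)(e_1 m e_2)$, the pair $(e_1 a e_1)(e_1 m e_2)$ and $(e_1 m e_2)(e_2 b e_2)$, and the pair $(e_2 n e_1)(e_1 a e_1)$ and $(e_2 b e_2)(e_2 n e_1)$, reading off the relevant Peirce components on both sides.

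The converse is a direct but lengthy entrywise calculation: one plugs two general matrices $X, Y \in \mathcal{G}$ into both sides of $\Theta_{\rm d}(XY) = \Theta_{\rm d}(X) Y + X \Theta_{\rm d}(Y)$ and checks equality in each Peirce corner using (1)--(4) together with the bimodule structure. The main obstacle is essentially organizational rather than conceptual: one must keep the inner-derivation-like contribution coming from the element $\left[\smallmatrix 0 & m_0\\ n_0 & 0\endsmallmatrix\right]$ cleanly separated from the contributions of $(\delta_1, \tau_2, \nu_3, \mu_4)$, so that the compatibility constraints (1)--(4) emerge in exactly the symmetric form stated rather than in a version twisted by $m_0$ and $n_0$.
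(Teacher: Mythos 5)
Your proposal is correct; note that the paper itself states this proposition without proof, citing \cite[Proposition 4.2]{LiWei}, so the only in-paper comparison is with the proofs of the analogous Propositions \ref{xxsec3.2} and \ref{xxsec3.6}. Your Peirce-decomposition argument with respect to $e_1$ and $e_2$ is essentially the same technique those proofs use (there the map is written out as sixteen component maps $\delta_i,\tau_i,\nu_i,\mu_i$ and specific matrices are substituted into the derivation identity, which is just a different bookkeeping for the same idempotent decomposition), so no genuinely different route is involved.
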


\begin{proposition}\label{xxsec3.2}
An additive map $\Theta_{\rm Jord}$ from $\mathcal{G}$ into itself
is a Jordan derivation if and only if it is of the form
$$
\begin{aligned}
& \Theta_{\rm Jord}\left(\left[
\begin{array}
[c]{cc}%
a & m\\
n & b\\
\end{array}
\right]\right)\\ =& \left[
\begin{array}
[c]{cc}%
\delta_1(a)-mn_0-m_0n+\delta_4(b) & am_0-m_0b+\tau_2(m)+\tau_3(n)\\
n_0a-bn_0+\nu_2(m)+\nu_3(n) & \mu_1(a)+n_0m+nm_0+\mu_4(b)\\
\end{array}
\right] ,(\bigstar2)\\
& \forall \left[
\begin{array}
[c]{cc}%
a & m\\
n & b\\
\end{array}
\right]\in \mathcal{G},
\end{aligned}
$$
where $m_0\in M, n_0\in N$ and
$$
\begin{aligned} \delta_1:& A \longrightarrow A, & \delta_4:& B \longrightarrow A, &  \tau_2:
& M\longrightarrow M, & \tau_3: & N\longrightarrow M,\\
\nu_2: & M\longrightarrow N, & \nu_3: & N\longrightarrow N & \mu_1:
& A\longrightarrow B & \mu_4: & B\longrightarrow B
\end{aligned}
$$
are all $\mathcal {R}$-linear maps satisfying the following
conditions:
\begin{enumerate}
\item[(1)] $\delta_1$ is a Jordan derivation on $A$ and
$\delta_1(mn)=-\delta_4(nm)+\tau_2(m)n+m\nu_3(n);$

\item [(2)] $\mu_4$ is a Jordan derivation on $B$ and
$\mu_4(nm)=-\mu_1(mn)+n\tau_2(m)+\nu_3(n)m;$

\item[(3)] $\delta_4(b^2)=2\delta_4(b)=0$ for all $ b\in B$ and
$\mu_1(a^2)=2\mu_1(a)=0$ for all $a\in A;$

\item[(4)] $\tau_2(am)=a\tau_2(m)+\delta_1(a)m+m\mu_1(a)$ and
$\tau_2(mb)=\tau_2(m)b+m\mu_4(b)+\delta_4(b)m;$

\item[(5)] $\nu_3(bn)=b\nu_3(n)+\mu_4(b)n+n\delta_4(b)$ and
$\nu_3(na)=\nu_3(n)a+n\delta_1(a)+\mu_1(a)n;$

\item[(6)] $\tau_3(na)=a\tau_3(n)$, $\tau_3(bn)=\tau_3(n)b$, $n\tau_3(n)=0$,
$\tau_3(n)n=0;$

\item[(7)] $\nu_2(am)=\nu_2(m)a$, $\nu_2(mb)=b\nu_2(m)$, $m\nu_2(m)=0$,
$\nu_2(m)m=0.$
\end{enumerate}
\end{proposition}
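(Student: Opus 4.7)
The plan is to prove both implications by reducing the Jordan derivation identity to computations on matrix units and their interactions with general diagonal and off-diagonal elements, using the idempotents $e_{11}=\left[\begin{smallmatrix}1_A&0\\0&0\end{smallmatrix}\right]$ and $e_{22}=\left[\begin{smallmatrix}0&0\\0&1_B\end{smallmatrix}\right]$. For the sufficiency direction, given the form $(\bigstar 2)$ and conditions (1)--(7), one verifies directly that $\Theta_{\rm Jord}(X^2)=\Theta_{\rm Jord}(X)X+X\Theta_{\rm Jord}(X)$ for a general element $X=\left[\begin{smallmatrix}a&m\\n&b\end{smallmatrix}\right]$. Since $X^2=\left[\begin{smallmatrix}a^2+mn&am+mb\\na+bn&nm+b^2\end{smallmatrix}\right]$, applying $\Theta_{\rm Jord}$ block by block produces four matrix-entry identities that match term by term, using (1)--(2) on the diagonal, (4)--(5) for the module actions of $\delta_1,\mu_4,\tau_2,\nu_3$, (3) to kill the contributions from $\delta_4$ and $\mu_1$ on squares, and (6)--(7) to eliminate the twisted terms $\tau_3(n)$ and $\nu_2(m)$. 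This direction is routine but lengthy bookkeeping.

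For the necessity, I would decompose $\Theta_{\rm Jord}$ by additivity so that each of the four block entries of $\Theta_{\rm Jord}(X)$ splits into four additive components, one per input variable $a,m,n,b$, yielding sixteen component maps. Applying the Jordan identity to $e_{11}$ forces $\Theta_{\rm Jord}(e_{11})$ to have vanishing diagonal and produces the elements $m_0\in M$ and $n_0\in N$ that appear in $(\bigstar 2)$ (with $\Theta_{\rm Jord}(e_{22})=-\Theta_{\rm Jord}(e_{11})$ since $e_{11}+e_{22}$ is the identity). Evaluating on purely diagonal squares $\left[\begin{smallmatrix}a&0\\0&0\end{smallmatrix}\right]^2$ and $\left[\begin{smallmatrix}0&0\\0&b\end{smallmatrix}\right]^2$ then isolates the maps $\delta_1,\mu_1,\delta_4,\mu_4$, yielding the Jordan derivation property for $\delta_1$ on $A$ and $\mu_4$ on $B$ (the first halves of (1)--(2)) together with the degenerate quadratic conditions (3) for the ``cross'' maps $\delta_4,\mu_1$. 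Polarising the Jordan identity to $\Theta_{\rm Jord}(XY+YX)=\Theta_{\rm Jord}(X)Y+Y\Theta_{\rm Jord}(X)+\Theta_{\rm Jord}(Y)X+X\Theta_{\rm Jord}(Y)$ and testing it with one diagonal and one off-diagonal input yields the module compatibility conditions (4)--(5) and the twisted relations (6)--(7). Finally, taking $X=\left[\begin{smallmatrix}0&m\\n&0\end{smallmatrix}\right]$, whose square is $\left[\begin{smallmatrix}mn&0\\0&nm\end{smallmatrix}\right]$, pins down the interaction relation $\delta_1(mn)=-\delta_4(nm)+\tau_2(m)n+m\nu_3(n)$ in (1) and its analogue in (2).

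The principal difficulty is organisational rather than conceptual: the sixteen component maps must be separated without double-counting, and the four ``odd'' pieces $\tau_3,\nu_2,\delta_4,\mu_1$ -- precisely the maps distinguishing Jordan derivations from ordinary derivations (compare Proposition \ref{xxsec3.1}) -- require particular care. The most delicate step will be extracting the degenerate quadratic identities in (3), (6), and (7): these emerge by forcing $\Theta_{\rm Jord}(X)X+X\Theta_{\rm Jord}(X)=\Theta_{\rm Jord}(X^2)$ when $X$ is purely off-diagonal with only one nonzero entry, so that cancellation of the asymmetric contributions pins down the required annihilation relations.
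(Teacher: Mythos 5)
Your proposal follows essentially the same route as the paper: decompose $\Theta_{\rm Jord}$ into sixteen additive component maps, evaluate the Jordan identity on the idempotents and on elements supported in one or two blocks to extract $m_0$, $n_0$ and conditions (1)--(7), and verify sufficiency by direct computation; your use of the polarized identity is just the additive expansion of $\Theta_{\rm Jord}((X+Y)^2)$, which is exactly what the paper does with elements such as $\left[\begin{smallmatrix}1&m\\0&0\end{smallmatrix}\right]$ and $\left[\begin{smallmatrix}a&0\\0&1\end{smallmatrix}\right]$. The only slip is attributing condition (3) to purely off-diagonal inputs in your closing paragraph (it comes from the diagonal elements, as you correctly state earlier), but this does not affect the argument.
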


\begin{proof} Suppose that the Jordan derivation $\Theta_{\rm Jd}$ is
of the form
$$
\begin{aligned}
& \Theta_{\rm Jord}\left(\left[
\begin{array}
[c]{cc}%
a & m \\
n & b \\
\end{array}
\right]\right) \\
= &\left[
\begin{array}
[c]{cc}%
\delta_1(a)+\delta_2(m)+\delta_3(n)+\delta_4(b) & \tau_1(a)+\tau_2(m)+\tau_3(n)+\tau_4(b) \\
\nu_1(a)+\nu_2(m)+\nu_3(n)+\nu_4(b) & \mu_1(a)+\mu_2(m)+\mu_3(n)+\mu_4(b) \\
\end{array}
\right] ,
\end{aligned}
$$
for all $\left[\smallmatrix a & m\\
n & b
\endsmallmatrix \right]\in \mathcal{G}$, where $\delta_1,\delta_2,\delta_3,\delta_4$ are
$\mathcal{R}$-linear maps from $A, M, N, B$ to $A$, respectively;
$\tau_1,\tau_2$, $\tau_3,\tau_4$ are $\mathcal{R}$-linear maps from
$A, M, N, B$ to $M$, respectively; $\nu_1,\nu_2,\nu_3,\nu_4$ are
$\mathcal{R}$-linear maps from $A, M, N, B$ to $N$, respectively;
$\mu_1,\mu_2,\mu_3,\mu_4$ are $\mathcal{R}$-linear maps from $A, M,
N, B$ to $B$, respectively.

For any $G\in \mathcal{G}$, we will intensively employ the Jordan
derivation equation
$$
\Theta_{\rm Jord}(G^2)=G\Theta_{\rm Jord}(G)+\Theta_{\rm Jord}(G)G.
\eqno(3.1)
$$
Taking $G=\left[\smallmatrix a & 0\\
0 & 0 \endsmallmatrix \right]$ into $(3.1)$ we have
$$
\begin{aligned}
\Theta_{\rm Jord}(G^2)=& \left[
\begin{array}
[c]{cc}%
\delta_1(a^2) & \tau_1(a^2)\\
\nu_1(a^2) & \mu_1(a^2)\\
\end{array}
\right]
\end{aligned}
\eqno(3.2)
$$
and
$$
\begin{aligned}
&G\Theta_{\rm Jord}(G)+\Theta_{\rm Jord}(G)G= \left[
\begin{array}
[c]{cc}%
a\delta_1(a)+\delta_1(a)a & a\tau_1(a)\\
\nu_1(a)a & 0\\
\end{array}
\right].
\end{aligned}
\eqno(3.3)
$$
By $(3.2)$ and $(3.3)$ we know that $\delta_1$ is a Jordan
derivation of $A$,
$$
\tau_1(a^2)=a\tau_1(a), \quad\quad\nu_1(a^2)=\nu_1(a)a \eqno(3.4)
$$
and
$$
\mu_1(a^2)=0.\eqno(3.5)
$$
for all $a\in A$.
Similarly, putting $G=\left[\smallmatrix 0 & 0\\
0 & b \endsmallmatrix \right]$ in $(3.1)$ gives
$$
\begin{aligned}
\Theta_{\rm Jord}(G^2)=& \left[
\begin{array}
[c]{cc}%
\delta_4(b^2) & \tau_4(b^2)\\
\nu_4(b^2) & \mu_4(b^2)\\
\end{array}
\right]
\end{aligned}
\eqno(3.6)
$$
and
$$
\begin{aligned}
&G\Theta_{\rm Jord}(G)+\Theta_{\rm Jord}(G)G= \left[
\begin{array}
[c]{cc}%
0 & \tau_4(b)b\\
b\nu_4(b) & b\mu_4(b)+\mu_4(b)b\\
\end{array}
\right].
\end{aligned}
\eqno(3.7)
$$
Combining $(3.6)$ with $(3.7)$ yields that $\mu_4$ is a Jordan
derivation of $B$,
$$
\tau_4(b^2)=\tau_4(b)b, \quad\quad\nu_4(b^2)=b\nu_4(b)\eqno(3.8)
$$
and
$$
\delta_4(b^2)=0.\eqno(3.9)
$$
for all $b\in B$.

Let us choose
$G=\left[\smallmatrix 0 & m\\
0 & 0 \endsmallmatrix \right]$ in $(3.1)$. Then
$$
\begin{aligned}
\Theta_{\rm Jord}(G^2)=& \left[
\begin{array}
[c]{cc}%
0 & 0\\
0 & 0\\
\end{array}
\right]
\end{aligned}
\eqno(3.10)
$$
and
$$
\begin{aligned}
&G\Theta_{\rm Jord}(G)+\Theta_{\rm Jord}(G)G= \left[
\begin{array}
[c]{cc}%
m\nu_2(m) & m\mu_2(m)+\delta_2(m)m\\
0 & \nu_2(m)m\\
\end{array}
\right].
\end{aligned}
\eqno(3.11)
$$
The relations $(3.10)$ and $(3.11)$ jointly imply that
$$
m\nu_2(m)=0,
\quad\quad\nu_2(m)m=0 \eqno(3.12)
$$
and
$$
\delta_2(m)m+m\mu_2(m)=0 \eqno(3.13)
$$
for all $m\in M$. Likewise, if we choose
$G=\left[\smallmatrix 0 & 0\\
n & 0 \endsmallmatrix \right]$, then
$$
\begin{aligned}
\Theta_{\rm Jord}(G^2)=& \left[
\begin{array}
[c]{cc}%
0 & 0\\
0 & 0\\
\end{array}
\right]
\end{aligned}
\eqno(3.14)
$$
and
$$
\begin{aligned}
&G\Theta_{\rm Jord}(G)+\Theta_{\rm Jord}(G)G= \left[
\begin{array}
[c]{cc}%
\tau_3(n)n & 0\\
n\delta_3(n)+\mu_3(n)n & n\tau_3(n)\\
\end{array}
\right].
\end{aligned}
\eqno(3.15)
$$
It follows from $(3.14)$ and $(3.15)$ that
$$
n\tau_3(n)=0, \quad\quad \tau_3(n)n=0\eqno(3.16)
$$
and
$$
\mu_3(n)n+n\delta_3(n)=0 \eqno(3.17)
$$
for all $n\in N$. Let us consider
$G=\left[\smallmatrix 1 & m\\
0 & 0 \endsmallmatrix \right]$ in $(3.1)$ and set $\tau_1(1)=m_0$
and $\nu_1(1)=n_0$. Since $\delta_1$ is a Jordan derivation of $A$,
$\delta_1(1)=0$. Moreover, $(3.5)$ implies that $\mu_1(1)=0$.
Therefore
$$
\begin{aligned}
\Theta_{\rm Jord}(G^2)=& \left[
\begin{array}
[c]{cc}%
\delta_2(m) & m_0+\tau_2(m) \\
n_0+\nu_2(m) & \mu_2(m) \\
\end{array}
\right].
\end{aligned}
\eqno(3.18)
$$
On the other hand, from $(3.12)$ and $(3.13)$ we have that
$$
\begin{aligned}
&G\Theta_{\rm Jord}(G)+\Theta_{\rm Jord}(G)G= \left[
\begin{array}
[c]{cc}%
2\delta_2(m)+mn_0 & m_0+\tau_2(m)\\
n_0+\nu_2(m) & n_0m\\
\end{array}
\right].
\end{aligned}
\eqno(3.19)
$$
By $(3.18)$ and $(3.19)$ we arrive at
$$
\delta_2(m)=-mn_0 \quad {\rm and} \quad\mu_2(m)=n_0m \eqno(3.20)
$$
for all $m\in M$. Let us take
$G=\left[\smallmatrix 1 & 0\\
n & 0 \endsmallmatrix \right]$ in $(3.1)$. Applying $(3.16)$ and
$(3.17)$ leads to
$$
\mu_3(n)=nm_0\quad{\rm and}\quad\delta_3(n)=-m_0n \eqno(3.21)
$$
for all $n\in N$. Furthermore, if we choose
$G=\left[\smallmatrix 1 & 0\\
0 & b \endsmallmatrix \right]$ in $(3.1)$, then it follows from
$(3.8)$ and $(3.9)$ that $2\delta_4(b)=0$,
$$
\nu_4(b)=-bn_0\quad {\rm and} \quad\tau_4(b)=-m_0b \eqno(3.22)
$$
for all $b\in B$. Taking
$G=\left[\smallmatrix a & 0\\
0 & 1 \endsmallmatrix \right]$ into $(3.1)$ and using $(3.4)$ and
$(3.5)$ we obtain $2\mu_1(a)=0$,
$$
\tau_1(a)=am_0\quad{\rm and}\quad\nu_1(a)=n_0a \eqno(3.23)
$$
for all $a\in A$. Let us put
$G=\left[\smallmatrix a & m\\
0 & 0 \endsmallmatrix \right]$ in $(3.1)$. Then the relations
$(3.5)$, $(3.19)$ and $(3.23)$ imply that
$$
\begin{aligned}
\Theta_{\rm Jord}(G^2)=& \left[
\begin{array}
[c]{cc}%
\delta_1(a^2)+\delta_2(am) & a^2m_0+\tau_2(am) \\
n_0a^2+\nu_2(am) & n_0am \\
\end{array}
\right].
\end{aligned}
\eqno(3.24)
$$
On the other hand, by the relations $(3.4)$, $(3.12)$, $(3.13)$,
$(3.20)$ and $(3.23)$ we get
$$
\begin{aligned}
& G\Theta_{\rm Jord}(G)+\Theta_{\rm Jord}(G)G\\ =& \left[
\begin{array}
[c]{cc}%
a\delta_1(a)+\delta_1(a)a+amn_0 & a^2m_0+a\tau_2(m)+\delta_1(a)m+m\mu_1(a)\\
n_0a^2+\nu_2(m)a & n_0am\\
\end{array}
\right].
\end{aligned}
\eqno(3.25)
$$
Combining $(3.24)$ with $(3.25)$ yields $\nu_2(am)=\nu_2(m)a$ and
$$
\tau_2(am)=a\tau_2(m)+\delta_1(a)m+m\mu_1(a)
$$
for all $a\in A, m\in M$. Similarly, taking
$G=\left[\smallmatrix a & 0\\
n & 0 \endsmallmatrix \right]$ into $(3.1)$ gives
$\tau_3(na)=a\tau_3(n)$ and
$$
\nu_3(na)=\nu_3(n)a+n\delta_1(a)+\mu_1(a)n
$$
for all $n\in N, a\in A$. Let us choose
$G=\left[\smallmatrix 0 & m\\
0 & b \endsmallmatrix \right]$ in $(3.1)$. We will get
$\nu_2(mb)=b\nu_2(m)$ and
$$
\tau_2(mb)=\tau_2(m)b+m\mu_4(b)+\delta_4(b)m
$$
for all $m\in M, b\in B$. Putting
$G=\left[\smallmatrix 0 & 0\\
n & b \endsmallmatrix \right]$ in $(3.1)$ and employing the same
computational approach we conclude that $\tau_3(bn)=\tau_3(n)b$ and
$\nu_3(bn)=b\nu_3(n)+\mu_4(b)n+n\delta_4(b)$ for all $b\in B, n\in
N$. Finally, let us set
$G=\left[\smallmatrix 0 & m\\
n & 0 \endsmallmatrix \right]$ in $(3.1)$. We have that
$\delta_1(mn)=-\delta_4(nm)+\tau_2(m)n+m\nu_3(n)$ and
$\mu_4(nm)=-\mu_1(mn)+n\tau_2(m)+\nu_3(n)m$ for all $m\in M, n\in
N$.

If $\Theta_{\rm Jord}$ has the form $(\bigstar2)$ and satisfies
conditions $(1)-(7)$, the assertion that $\Theta_{\rm Jord}$ is a
Jordan derivation of $\mathcal{G}$ will follow from direct
computations. We complete the proof of this proposition.
\end{proof}

From now on, we always assume in this section that $M$ is faithful
as a left $A$-module and also as a right $B$-module, but no any
constraint conditions concerning the bimodule $N$. Then we have the
following:

\begin{corollary}\label{xxsec3.3}
Let $\mathcal{G}$ be a $2$-torsion free generalized matrix algebra
over the commutative ring $\mathcal{R}$. An additive map
$\Theta_{\rm Jord}$ form $\mathcal{G}$ into itself is a Jordan
derivation of $\mathcal{G}$ if and only if it has the form
$$
\begin{aligned}
& \Theta_{\rm Jord}\left(\left[
\begin{array}
[c]{cc}%
a & m\\
n & b\\
\end{array}
\right]\right)\\ =& \left[
\begin{array}
[c]{cc}%
\delta_1(a)-mn_0-m_0n & am_0-m_0b+\tau_2(m)+\tau_3(n)\\
n_0a-bn_0+\nu_2(m)+\nu_3(n) & n_0m+nm_0+\mu_4(b)\\
\end{array}
\right] , (\bigstar3)\\
& \forall \left[
\begin{array}
[c]{cc}%
a & m\\
n & b\\
\end{array}
\right]\in \mathcal{G},
\end{aligned}
$$
where $m_0\in M, n_0\in N$ and
$$
\begin{aligned} \delta_1:& A \longrightarrow A,  &  \tau_2:
& M\longrightarrow M, & \tau_3: & N\longrightarrow M,\\
\nu_2: & M\longrightarrow N, & \nu_3: & N\longrightarrow N &  \mu_4:
& B\longrightarrow B
\end{aligned}
$$
are all $\mathcal {R}$-linear maps satisfying conditions
\begin{enumerate}
\item [(1)] $\delta_1$ is a derivation on $A$ and
$\delta_1(mn)=\tau_2(m)n+m\nu_3(n);$

\item[(2)] $\mu_4$ is a derivation on $B$ and
$\mu_4(nm)=n\tau_2(m)+\nu_3(n)m;$

\item[(3)] $\tau_2(am)=a\tau_2(m)+\delta_1(a)m$ and
$\tau_2(mb)=\tau_2(m)b+m\mu_4(b);$

\item[(4)] $\nu_3(na)=\nu_3(n)a+n\delta_1(a)$ and
$\nu_3(bn)=b\nu_3(n)+\mu_4(b)n;$

\item[(5)] $\tau_3(na)=a\tau_3(n)$, $\tau_3(bn)=\tau_3(n)b$,
$n\tau_3(n)=0$, $\tau_3(n)n=0;$

\item[(6)] $\nu_2(am)=\nu_2(m)a$, $\nu_2(mb)=b\nu_2(m)$,
$m\nu_2(m)=0$, $\nu_2(m)m=0.$
\end{enumerate}
\end{corollary}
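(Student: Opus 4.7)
The plan is to reduce Corollary \ref{xxsec3.3} to Proposition \ref{xxsec3.2} by exploiting the two new hypotheses: $2$-torsion freeness of $\mathcal{G}$, and faithfulness of $M$ as both a left $A$-module and a right $B$-module. First I invoke Proposition \ref{xxsec3.2} to express any Jordan derivation $\Theta_{\rm Jord}$ in the form $(\bigstar 2)$, with the auxiliary maps $\delta_4 \colon B \to A$, $\mu_1 \colon A \to B$, and the seven compatibility conditions. The aim is to show that, under the extra hypotheses, $\delta_4$ and $\mu_1$ vanish identically, and that the Jordan derivations $\delta_1$ and $\mu_4$ on $A$ and $B$ are in fact ordinary derivations; once this is done the form $(\bigstar 3)$ and the six conditions listed are immediate specializations.

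For the vanishing of $\delta_4$ and $\mu_1$, I would apply condition (3) of Proposition \ref{xxsec3.2}, namely $2\delta_4(b) = 0$ and $2\mu_1(a) = 0$ for all $b \in B$, $a \in A$. Since $2$-torsion freeness of $\mathcal{G}$ forces the same on the entries $A$ and $B$ (via the diagonal embeddings), we obtain $\delta_4 \equiv 0$ and $\mu_1 \equiv 0$. Substituting these back into conditions (1), (2), (4), (5) of Proposition \ref{xxsec3.2} instantly yields the equations $\delta_1(mn) = \tau_2(m)n + m\nu_3(n)$, $\mu_4(nm) = n\tau_2(m) + \nu_3(n)m$, $\tau_2(am) = a\tau_2(m) + \delta_1(a)m$, $\tau_2(mb) = \tau_2(m)b + m\mu_4(b)$, $\nu_3(na) = \nu_3(n)a + n\delta_1(a)$ and $\nu_3(bn) = b\nu_3(n) + \mu_4(b)n$ asserted in (1)--(4) of the corollary; conditions (5) and (6) are copied verbatim from (6) and (7) of the proposition.

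The main technical step, which I expect to be the principal obstacle, is upgrading $\delta_1$ and $\mu_4$ from Jordan derivations to genuine derivations; this is where the faithfulness hypotheses on $M$ enter. For $\delta_1$, I would compute $\tau_2((ab)m)$ in two ways using the already-established identity $\tau_2(am) = a\tau_2(m) + \delta_1(a)m$: on the one hand directly as $ab\,\tau_2(m) + \delta_1(ab)m$, and on the other hand as $\tau_2(a(bm)) = a\tau_2(bm) + \delta_1(a)(bm) = ab\,\tau_2(m) + a\delta_1(b)m + \delta_1(a)bm$. Comparing these expressions produces
\begin{equation*}
\bigl(\delta_1(ab) - \delta_1(a)b - a\delta_1(b)\bigr)m = 0 \quad \text{for all } m \in M,
\end{equation*}
and faithfulness of $M$ as a left $A$-module forces $\delta_1(ab) = \delta_1(a)b + a\delta_1(b)$. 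A symmetric argument, comparing $\tau_2(m(bc))$ with $\tau_2((mb)c)$ via $\tau_2(mb) = \tau_2(m)b + m\mu_4(b)$, yields $m\bigl(\mu_4(bc) - \mu_4(b)c - b\mu_4(c)\bigr) = 0$, and faithfulness of $M$ as a right $B$-module then gives the derivation property of $\mu_4$.

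Finally, for the converse direction I would simply observe that any map of the form $(\bigstar 3)$ satisfying (1)--(6) lies in the class described by $(\bigstar 2)$ together with conditions (1)--(7) of Proposition \ref{xxsec3.2} (taking $\delta_4 = 0$ and $\mu_1 = 0$ trivially satisfies (3) of that proposition), so it is automatically a Jordan derivation. No further computation beyond the bookkeeping of matching conditions is required, and Proposition \ref{xxsec3.2} has already absorbed all of the routine verification.
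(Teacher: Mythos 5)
Your proposal is correct and takes essentially the same route as the paper: it reduces to Proposition \ref{xxsec3.2}, uses $2$-torsion freeness to force $\delta_4=\mu_1=0$, and upgrades $\delta_1$ and $\mu_4$ from Jordan derivations to derivations by computing $\tau_2(a_1a_2m)$ (respectively $\tau_2(mb_1b_2)$) in two ways and invoking the standing faithfulness assumption on $M$. This is exactly the paper's argument, including the converse by direct verification.
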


\begin{proof}
Let $\Theta_{\rm Jord}$ be a Jordan derivation of $\mathcal{G}$.
Then $\Theta_{\rm Jord}$ has the form of $(\bigstar2)$ and satisfies
all additional conditions $(1)-(7)$ of Proposition \ref{xxsec3.2}.
Since $\mathcal{G}$ is a $2$-torsion free generalized matrix
algebra, $\delta_4=0$ and $\mu_1=0$ by condition $(3)$ of
Proposition \ref{xxsec3.2}. Condition $(3)$ of Proposition
\ref{xxsec3.2} vanishes in the present case. Condition $(4)$ of
Proposition \ref{xxsec3.2} correspondingly becomes
$$
\tau_2(am)=a\tau_2(m)+\delta_1(a)m
$$
and
$$
\tau_2(mb)=\tau_2(m)b+m\mu_4(b).
$$
Clearly, we only need to prove that $\delta_1$ is a derivation of
$A$ and that $\mu_4$ is a derivation of $B$. Then for arbitrary
elements $a_1, a_2\in A$, we have
$$
\tau_2(a_1a_2m)=a_1a_2\tau_2(m)+\delta_1(a_1a_2)m \eqno(3.26)
$$
and
$$
\begin{aligned}
\tau_2(a_1a_2m) & =a_1\tau_2(a_2m)+\delta_1(a_1)a_2m\\
& =a_1a_2\tau_2(m)+a_1\delta_1(a_2)m+\delta_1(a_1)a_2m.
\end{aligned}\eqno(3.27)
$$
Combining $(3.26)$ and $(3.27)$ gives
$$
\delta_1(a_1a_2)m=a_1\delta_1(a_2)m+\delta_1(a_1)a_2m.\eqno(3.28)
$$
Note that $M$ is faithful as left $A$-module. Relation $(3.28)$
implies that
$$
\delta_1(a_1a_2)=a_1\delta_1(a_2)+\delta_1(a_1)a_2
$$
for all $a_1, a_2\in A$. So $\delta_1$ is a derivation of $A$.
Similarly, we can show that $\mu_4$ is a derivation of $B$.

Conversely, if an additive map $\Theta_{\rm Jord}$ of $\mathcal{G}$
is of the form $(\bigstar3)$ and satisfies all additional conditions
$(1)-(6)$, then the fact that is a Jordan derivation of
$\mathcal{G}$ will follow from direct computations.
\end{proof}

In view of Herstein's result and recent intensive works
\cite{Bresar1, Bresar2, Bresar4, Johnson, MaJi, Zhang, XiaoWei1,
ZhangYu}, the following question is at hand.

\begin{question}\label{xxsec3.4}
Is each Jordan derivation on a generalized matrix algebra
$\mathcal{G}$ a derivation, or equivalently, do there exist proper
Jordan derivations on generalized matrix algebras?
\end{question}

The following counterexample provides an explicit answer to the
above question. It is shown that Jordan derivations of generalized
matrix algebras need not be derivations. Equivalently, there indeed
exist proper Jordan derivations on certain generalized matrix
algebras.

\begin{example}\label{xxsec3.5}
Let $\mathcal{G}=\left[\smallmatrix A & M\\
N & B \endsmallmatrix \right]$ be a generalized matrix algebra of order $2$
over the commutative ring $\mathcal{R}$. For arbitrary $X=\left[\smallmatrix a_1 & m_1\\
n_1 & b_1 \endsmallmatrix \right]\in \mathcal{G}, Y=\left[\smallmatrix a_2 & m_2\\
n_2 & b_2 \endsmallmatrix \right]\in \mathcal{G}$, we define the sum
$X+Y$ as usual. The multiplication $XY$ is given by the rule
$$
XY=\left[
\begin{array}
[c]{cc}%
a_1a_2 & a_1m_2+m_1b_2\\
n_1a_2+b_1n_2 & b_1b_2\\
\end{array}
\right].\eqno(\spadesuit)
$$
Such kind of generalized matrix algebras are called \textit{trivial
generalized matrix algebras}. That is, the bilinear pairings
$\Phi_{MN}=\Psi_{NM}=0$ are both zero. Let us establish an
$\mathcal{R}$-linear map
$$
\begin{aligned}
\Gamma_{\rm Jord}: \mathcal{G} & \longrightarrow
\mathcal{G}\\
\left[
\begin{array}
[c]{cc}%
a & m\\
n & b\\
\end{array}
\right] & \longrightarrow \left[
\begin{array}
[c]{cc}%
0 & m+n\\
m-n & 0\\
\end{array}
\right], \hspace{3pt} \forall \left[
\begin{array}
[c]{cc}%
a & m\\
n & b\\
\end{array}
\right]\in \mathcal{G}.
\end{aligned}
$$
By straightforward computations, we know that $\Gamma_{\rm Jord}$ is
a Jordan derivation of $\mathcal{G}$, but not a derivation.

On the other hand, we can also define two $\mathcal{R}$-linear maps
$$
\begin{aligned}
\Theta_1: \mathcal{G} & \longrightarrow
\mathcal{G}\\
\left[
\begin{array}
[c]{cc}%
a & m\\
n & b\\
\end{array}
\right] & \longrightarrow \left[
\begin{array}
[c]{cc}%
0 & m\\
-n & 0\\
\end{array}
\right], \hspace{3pt} \forall \left[
\begin{array}
[c]{cc}%
a & m\\
n & b\\
\end{array}
\right]\in \mathcal{G}
\end{aligned}
$$
and
$$
\begin{aligned}
\Theta_2: \mathcal{G} & \longrightarrow
\mathcal{G}\\
\left[
\begin{array}
[c]{cc}%
a & m\\
n & b\\
\end{array}
\right] & \longrightarrow \left[
\begin{array}
[c]{cc}%
0 & n\\
m & 0\\
\end{array}
\right], \hspace{3pt} \forall \left[
\begin{array}
[c]{cc}%
a & m\\
n & b\\
\end{array}
\right]\in \mathcal{G}.
\end{aligned}
$$
It is easy to see that $\Theta_1$ is a derivation of $\mathcal{G}$
and $\Theta_2$ is an anti-derivation of $\mathcal{G}$. Therefore
$\Gamma_{\rm Jord}$ is the sum of the derivation $\Theta_1$ and the
anti-derivation $\Theta_2$.

As a matter of fact, there exist some generalized matrix algebras
whose multiplication satisfies the rule $(\spadesuit)$. Let
$\mathcal{R}^\prime$ be an associative ring with identity and
$\mathcal{Z(R^\prime)}$ be its center. Let us consider the usual
$2\times 2$ matrix ring $
\left[\smallmatrix \mathcal{R}^\prime & \mathcal{R}^\prime\\
\mathcal{R}^\prime & \mathcal{R}^\prime \endsmallmatrix \right]$. It
will become a generalized matrix algebra under the usual addition
and the following multiplication rule
$$
\left[
\begin{array}
[c]{cc}%
a & c\\
d & b\\
\end{array}
\right]\left[
\begin{array}
[c]{cc}%
e & g\\
h & f\\
\end{array}
\right]=\left[
\begin{array}
[c]{cc}%
ae+sch & ag+cf\\
de+bh & sdg+bf\\
\end{array}
\right],
$$
where $s\in \mathcal{Z(R^\prime)}$. A trivial generalized matrix
algebra arises in the case of $s=0$. The usual $2\times 2$ matrix
ring is produced when $s=1$.
\end{example}

In view of Example \ref{xxsec3.5} and our main motivation, we now
begin to describe the forms of anti-derivations on the generalized
matrix algebra $\mathcal{G}$. We will see below, Example
\ref{xxsec3.5} can be lifted and extracted to a more general
conclusion.

\begin{proposition}\label{xxsec3.6}
An additive map $\Theta_{\rm antid}$ from $\mathcal{G}$ into itself
is an antiderivation if and only if it has the form
$$
\begin{aligned}
& \Theta_{\rm antid}\left(\left[
\begin{array}
[c]{cc}%
a & m\\
n & b\\
\end{array}
\right]\right)  \\=& \left[
\begin{array}
[c]{cc}%
0 & am_0-m_0b+\tau_3(n)\\
n_0a-bn_0+\nu_2(m) & 0\\
\end{array}
\right] , (\bigstar4) \\ & \forall \left[
\begin{array}
[c]{cc}%
a & m\\
n & b\\
\end{array}
\right]\in \mathcal{G},
\end{aligned}
$$
where $m_0\in M, n_0\in N$ and
$$
\begin{aligned} \tau_3: & N\longrightarrow M, &
\nu_2: & M\longrightarrow N
\end{aligned}
$$
are $\mathcal{R}$-linear maps satisfying the following conditions:
\begin{enumerate}
\item[{\rm(1)}] $[a, a']m_0=0$, $m_0[b, b']=0$, $n_0[a, a']=0$,  $[b,
b']n_0=0$ for all $a'\in A, b'\in B;$

\item[{\rm(2)}] $m_0n=0$, $nm_0=0$, $mn_0=0$, $n_0m=0;$

\item[{\rm(3)}] $\tau_3(na)=a\tau_3(n)$, $\tau_3(bn)=\tau_3(n)b$,
$n\tau_3(n')=0$, $\tau_3(n)n'=0$ for all $n'\in N;$

\item[{\rm(4)}] $\nu_2(am)=\nu_2(m)a$, $\nu_2(mb)=b\nu_2(m)$,
$m\nu_2(m')=0$, $\nu_2(m)m'=0$ for all $m'\in M$.
\end{enumerate}
\end{proposition}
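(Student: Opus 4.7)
The plan is to mirror the strategy of the proof of Proposition \ref{xxsec3.2}, but work with the antiderivation identity
$$
\Theta_{\rm antid}(GH)=\Theta_{\rm antid}(H)G+H\Theta_{\rm antid}(G)
$$
in place of the Jordan derivation identity, and at the end exploit the standing hypothesis that $M$ is faithful as a left $A$-module and as a right $B$-module in order to kill the two remaining diagonal maps.

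First I would write $\Theta_{\rm antid}$ in component form with 16 $\mathcal{R}$-linear maps $\delta_i,\tau_i,\nu_i,\mu_i$ exactly as in the setup of Proposition \ref{xxsec3.2}. Testing the identity on the pairs $G=H=\bigl[\smallmatrix a&0\\0&0\endsmallmatrix\bigr]$ and then $G=\bigl[\smallmatrix a&0\\0&0\endsmallmatrix\bigr]$, $H=\bigl[\smallmatrix a'&0\\0&0\endsmallmatrix\bigr]$ forces $\mu_1(aa')=0$ (hence $\mu_1=0$ since $A$ is unital), shows that $\delta_1$ is an antiderivation of $A$, and yields $\tau_1(a)=am_0$, $\nu_1(a)=n_0a$ for $m_0:=\tau_1(1)$ and $n_0:=\nu_1(1)$, together with the commutation relations $[a,a']m_0=0$ and $n_0[a,a']=0$. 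The dual computation on $\bigl[\smallmatrix 0&0\\0&b\endsmallmatrix\bigr]$'s gives $\delta_4=0$, $\mu_4$ antiderivation of $B$, $\tau_4(b)=-m_0b$, $\nu_4(b)=-bn_0$, plus $m_0[b,b']=0$ and $[b,b']n_0=0$.

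Next I would test on mixed pairs involving $\bigl[\smallmatrix 0&m\\0&0\endsmallmatrix\bigr]$ and $\bigl[\smallmatrix 0&0\\n&0\endsmallmatrix\bigr]$. Using $e_{11}$ paired with $e_{12}(m)$ on both sides, and similarly $e_{11}$ with $e_{21}(n)$, forces $\delta_2=\tau_2=\mu_2=0$ and $\delta_3=\nu_3=\mu_3=0$, together with the ``annihilation'' conditions $m_0n=nm_0=mn_0=n_0m=0$ of item (2). The pair $G=e_{12}(m)$, $H=e_{12}(m')$ (where $GH=HG=0$) delivers $m'\nu_2(m)=0$ and $\nu_2(m')m=0$, and the analogous pair with $e_{21}$'s gives $n\tau_3(n')=0$ and $\tau_3(n)n'=0$. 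Pairing $e_{12}(m)$ with $e_{11}(a)$ and $e_{22}(b)$ yields $\nu_2(am)=\nu_2(m)a$ and $\nu_2(mb)=b\nu_2(m)$, and symmetrically for $\tau_3$.

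The decisive step, where the faithfulness hypothesis enters, is to kill $\delta_1$ and $\mu_4$. Taking $G=\bigl[\smallmatrix 0&m\\0&0\endsmallmatrix\bigr]$ and $H=\bigl[\smallmatrix a&0\\0&0\endsmallmatrix\bigr]$, so that $GH=0$, the antiderivation identity produces in the $(1,2)$ slot the equation $\delta_1(a)m=0$ for all $a\in A$, $m\in M$; faithfulness of $M$ as a left $A$-module then forces $\delta_1=0$. Dually, $G=\bigl[\smallmatrix 0&0\\0&b\endsmallmatrix\bigr]$, $H=\bigl[\smallmatrix 0&m\\0&0\endsmallmatrix\bigr]$ (with $GH=0$) produces $m\mu_4(b)=0$, and faithfulness of $M$ as a right $B$-module gives $\mu_4=0$. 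Collecting all surviving terms yields precisely the form $(\bigstar4)$ with the listed conditions (1)--(4). The converse is a routine block-matrix verification, which I would simply check on the four types of elementary matrices and extend by $\mathcal{R}$-linearity.

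The main obstacle is bookkeeping rather than conceptual: one must choose the test pairs $(G,H)$ in the right order so that earlier derived vanishings (e.g.\ $mn_0=0$, $n_0m=0$, $\tau_2=\mu_2=0$) clean up the later computations; otherwise several off-diagonal entries look nontrivial and mask the key identities $\delta_1(a)m=0$ and $m\mu_4(b)=0$ that use faithfulness. Once those two identities are isolated, the result drops out.
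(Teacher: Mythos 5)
Your proposal follows essentially the same route as the paper's proof: the same sixteen-component decomposition, the same systematic testing of the antiderivation identity on pairs of elementary matrices (with the same ordering so that earlier vanishings such as $\mu_1=0$, $\tau_2=\mu_2=0$, $mn_0=n_0m=0$ simplify later computations), and the same final use of the faithfulness of $M$ applied to $\delta_1(a)m=0$ and $m\mu_4(b)=0$ to conclude $\delta_1=0$ and $\mu_4=0$. The only differences are cosmetic choices of test pairs (e.g.\ the paper uses $G_2=\bigl[\smallmatrix a&m\\0&0\endsmallmatrix\bigr]$ where you use $\bigl[\smallmatrix a&0\\0&0\endsmallmatrix\bigr]$, which agree by additivity), so the argument is correct and matches the paper.
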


\begin{proof}
Suppose that the Jordan derivation $\Theta_{\rm antid}$ is of the
form
$$
\begin{aligned}
& \Theta_{\rm antid}\left(\left[
\begin{array}
[c]{cc}%
a & m \\
n & b \\
\end{array}
\right]\right) \\
= &\left[
\begin{array}
[c]{cc}%
\delta_1(a)+\delta_2(m)+\delta_3(n)+\delta_4(b) & \tau_1(a)+\tau_2(m)+\tau_3(n)+\tau_4(b) \\
\nu_1(a)+\nu_2(m)+\nu_3(n)+\nu_4(b) & \mu_1(a)+\mu_2(m)+\mu_3(n)+\mu_4(b) \\
\end{array}
\right] ,
\end{aligned}
$$
for all $\left[\smallmatrix a & m\\
n & b
\endsmallmatrix \right]\in \mathcal{G}$, where $\delta_1,\delta_2,\delta_3,\delta_4$ are
$\mathcal{R}$-linear maps from $A, M, N, B$ to $A$, respectively;
$\tau_1,\tau_2$, $\tau_3,\tau_4$ are $\mathcal{R}$-linear maps from
$A, M, N, B$ to $M$, respectively; $\nu_1,\nu_2,\nu_3,\nu_4$ are
$\mathcal{R}$-linear maps from $A, M, N, B$ to $N$, respectively;
$\mu_1,\mu_2,\mu_3,\mu_4$ are $\mathcal{R}$-linear maps from $A, M,
N, B$ to $B$, respectively.

For any $G_1, G_2\in \mathcal{G}$, we will intensively employ the
antiderivation equation
$$
\Theta_{\rm antid}(G_1G_2)=\Theta_{\rm antid}(G_2)G_1+G_2\Theta_{\rm
antid}(G_1). \eqno(3.29)
$$

Taking $G_1=\left[\smallmatrix a & 0\\
0 & 0 \endsmallmatrix \right]$ and $G_2=\left[\smallmatrix a' & 0\\
0 & 0 \endsmallmatrix \right]$ into $(3.29)$ yields
$$
\begin{aligned}
\Theta_{\rm antid}(G_1G_2)=& \left[
\begin{array}
[c]{cc}%
\delta_1(aa') & \tau_1(aa')\\
\nu_1(aa') & \mu_1(aa')\\
\end{array}
\right]
\end{aligned}
\eqno(3.30)
$$
and
$$
\begin{aligned}
&\Theta_{\rm antid}(G_2)G_1+G_2\Theta_{\rm antid}(G_1)= \left[
\begin{array}
[c]{cc}%
\delta_1(a')a+a'\delta_1(a) & a'\tau_1(a)\\
\nu_1(a')a & 0\\
\end{array}
\right].
\end{aligned}
\eqno(3.31)
$$
It follows from $(3.30)$ with $(3.31)$ that $\delta_1$ is an
antiderivation of $A$, $\mu_1=0$ and
$$
\nu_1(aa')=\nu_1(a')a \eqno(3.32)
$$
for all $a, a'\in A$. Let us set $a'=1$ in $(3.32)$ and denote
$\nu_1(1)$ by $n_0$. Then $\nu_1(a)=n_0a.$ Furthermore, $(3.32)$
implies that $n_0aa'=n_0a'a$ for all $a,a'\in A$, that is, $n_0[a,
a']=0$ for all $a,a'\in A$. If we denote $\tau_1(1)$ by $m_0$, then
we obtain $\tau_1(a)=am_0$ and $[a, a']m_0=0$ for all $a,a'\in A$.

Let us choose
$G_1=\left[\smallmatrix 0 & 0\\
0 & b \endsmallmatrix \right]$ and $G_2=\left[\smallmatrix 0 & 0\\
0 & b' \endsmallmatrix \right]$ in $(3.29)$. By the same
computational approach we conclude that $\mu_4$ is an antiderivation
of $B$, $\delta_4=0$ and
$$
\tau_4(b)=\tau_4(1)b, \quad \nu_4(b)=b\nu_4(1), \quad \tau_4(1)[b,
b']=0, \quad [b, b']\nu_4(1)=0 \eqno(3.33)
$$
for all $b,b'\in B$. We claim that $\tau_4(1)=-m_0$. In fact, this
can be obtained by
taking $G_1=\left[\smallmatrix 0 & 0\\
0 & 1 \endsmallmatrix \right]$ and $G_2=\left[\smallmatrix 1 & 0\\
0 & 0 \endsmallmatrix \right]$ in $(3.29)$. Likewise, we assert that
$\nu_4(1)=-n_0$. Thus the relation $(3.33)$ becomes
$$
\tau_4(b)=-m_0b, \quad \nu_4(b)=-bn_0, \quad m_0[b, b']=0, \quad [b,
b']n_0=0
$$
for all $b,b'\in B$.

Putting $G_1=\left[\smallmatrix 1 & 0\\
0 & 0 \endsmallmatrix \right]$ and $G_2=\left[\smallmatrix 0 & m\\
0 & 0 \endsmallmatrix \right]$ in $(3.29)$ and using the fact
$\mu_1=0$ gives
$$
\begin{aligned}
\Theta_{\rm antid}(G_1G_2)=& \left[
\begin{array}
[c]{cc}%
\delta_2(m) & \tau_2(m)\\
\nu_2(m) & \mu_2(m)\\
\end{array}
\right]
\end{aligned}
\eqno(3.34)
$$
and
$$
\begin{aligned}
&\Theta_{\rm antid}(G_2)G_1+G_2\Theta_{\rm antid}(G_1)= \left[
\begin{array}
[c]{cc}%
\delta_2(m)+mn_0 & 0\\
\nu_2(m) & 0\\
\end{array}
\right].
\end{aligned}
\eqno(3.35)
$$
Combining $(3.34)$ with $(3.35)$ leads to
$$
mn_0=0,\quad \tau_2=0,\quad \mu_2=0
$$
for all $m\in M$. Interchanging $G_1$ and $G_2$ we will get
$$
\delta_2=0,\quad n_0m=0
$$
for all $m\in M$.

If we take
$G_1=\left[\smallmatrix 1 & 0\\
0 & 0 \endsmallmatrix \right]$ and $G_2=\left[\smallmatrix 0 & 0\\
n & 0 \endsmallmatrix \right]$ into $(3.29)$, then
$$
\begin{aligned}
\Theta_{\rm antid}(G_1G_2)=& \left[
\begin{array}
[c]{cc}%
0 & 0\\
0 & 0\\
\end{array}
\right]
\end{aligned}
\eqno(3.36)
$$
and
$$
\begin{aligned}
&\Theta_{\rm antid}(G_2)G_1+G_2\Theta_{\rm antid}(G_1)= \left[
\begin{array}
[c]{cc}%
\delta_3(n) & 0\\
\nu_3(n) & 0\\
\end{array}
\right].
\end{aligned}
\eqno(3.37)
$$
will follow from the fact $\delta_1(1)=0$. By $(3.36)$ and $(3.37)$
we obtain that
$$
\delta_3=0,\quad \nu_3=0.\eqno(3.38)
$$
Interchanging $G_1$ and $G_2$ again yields
$$
\mu_3=0, \quad m_0n=0 \eqno(3.39)
$$
for all $n\in N$. In order to get $nm_0=0$, we only need to put
$G_1=\left[\smallmatrix 0 & 0\\
0 & 1 \endsmallmatrix \right]$ and $G_2=\left[\smallmatrix 0 & 0\\
n & 0 \endsmallmatrix \right]$ in $(3.29)$.

Taking $G_1=\left[\smallmatrix 0 & 0\\
n & 0 \endsmallmatrix \right]$ and $G_2=\left[\smallmatrix a & 0\\
0 & 0 \endsmallmatrix \right]$ into $(3.29)$ and applying $(3.38)$
and $(3.39)$ we arrive at
$$
\begin{aligned}
\Theta_{\rm antid}(G_1G_2)=& \left[
\begin{array}
[c]{cc}%
0 & \tau_3(na)\\
0 & 0\\
\end{array}
\right].
\end{aligned}
\eqno(3.40)
$$
The fact $\mu_1=0$ and $(3.39)$ imply that
$$
\begin{aligned}
&\Theta_{\rm antid}(G_2)G_1+G_2\Theta_{\rm antid}(G_1)= \left[
\begin{array}
[c]{cc}%
0 & a\tau_3(n)\\
0 & 0\\
\end{array}
\right].
\end{aligned}
\eqno(3.41)
$$
The relations $(3.40)$ and $(3.41)$ jointly show that
$\tau_3(na)=a\tau_3(n)$ for all $a\in A, n\in N$.
Likewise, if we choose $G_1=\left[\smallmatrix 0 & 0\\
0 & b \endsmallmatrix \right]$ and $G_2=\left[\smallmatrix 0 & 0\\
n & 0 \endsmallmatrix \right]$ in $(3.29)$, then
$\tau_3(bn)=\tau_3(n)b$ for all $b\in B, n\in N$. The equalities
$\nu_2(am)=\nu_2(m)a$ and $\nu_2(mb)=b\nu_2(m)$ can be obtained by
analogous discussions and the details are omitted here.

Let us consider $G_1=\left[\smallmatrix 0 & 0\\
n & 0 \endsmallmatrix \right]$ and $G_2=\left[\smallmatrix 0 & 0\\
n' & 0 \endsmallmatrix \right]$ in $(3.29)$. Then we get
$n\tau_3(n')=0$ and
$\tau_3(n)n'=0$ for all $n, n'\in N$. Putting $G_1=\left[\smallmatrix 0 & m\\
0 & 0 \endsmallmatrix \right]$ and $G_2=\left[\smallmatrix 0 & m'\\
0 & 0 \endsmallmatrix \right]$ in $(3.29)$ yields $m\nu_2(m')=0$ and
$\nu_2(m)m'=0$ for all $m, m'\in M$.

Taking $G_1=\left[\smallmatrix 0 & m\\
0 & 0 \endsmallmatrix \right]$ and $G_2=\left[\smallmatrix a & m\\
0 & 0 \endsmallmatrix \right]$ into $(3.29)$. Then $\delta_1(a)m=0$
for all $a\in A, m\in
M$. Putting $G_1=\left[\smallmatrix 0 & m\\
0 & b \endsmallmatrix \right]$ and $G_2=\left[\smallmatrix 0 & m\\
0 & 0 \endsmallmatrix \right]$ in $(3.29)$. Then $m\mu_4(b)=0$ for
all $b\in B, m\in M$.  It follows from the faithfulness of $M$ that
$\delta_1=0$ and $\mu_4=0$.

Conversely, suppose that $\Theta_{\rm antid}$ is of the form
$(\bigstar4)$ and satisfies conditions $(1)-(4)$. Then the fact that
$\Theta_{\rm antid}$ is a antiderivation of $\mathcal{G}$ will
follow by direct computations.
\end{proof}

Let us next observe the antiderivations of a class of generalized
matrix algebras.

\begin{definition}\label{xxsec3.7}
Let $\mathcal{G}=
\left[\smallmatrix A & M\\
N & B \endsmallmatrix \right]$ be a generalized matrix algebra
originating from the Morita context $(A, B, _AM_B,$ $ _BN_A,
\Phi_{MN}, \Psi_{NM})$. The bilinear form $\Phi_{MN}: M\underset
{B}{\otimes} N\longrightarrow A$ (resp. $\Psi_{NM}: N\underset
{A}{\otimes} M\longrightarrow B$) is called \textit{nondegenerate}
if for any $0\neq m\in M$ and $0\neq n\in N $, $\Phi_{MN}(m, N)\neq
0$ and $\Phi_{MN}(M, n)\neq 0$ (resp. $\Psi_{NM}(n, M)\neq 0$ and
$\Psi_{NM}(N, m)\neq 0$).
\end{definition}

\begin{example}\label{xxsec3.8}
Let $H$ be a finite dimensional Hopf algebra over filed $\mathbb{K}$
and $A$ be an $H$-module algebra. Let $A^H$ be the invariant
subalgebra of $H$ on $A$, and $A\# H$ be the smash product algebra
of $A$ and $H$. We now consider the generalized matrix algebra
$$
\mathcal{G}_{\rm SPA}=\left[
\begin{array}
[c]{cc}%
A^H & M\\
N & A\#H\\
\end{array}
\right]
$$
defined in Example \ref{xxsec2.1}, where $M=_{A^H}A_{A\#H}$ and
$N=_{A\#H}A_{A^H}$. Suppose that $M$ is a faithful right (or left)
$A\#H$-module. By \cite[Proposition 2.13]{CohenFischmanMontgomery}
we know that the bilinear form $\Phi_{MN}$ will be nondegenerate. In
this case, we easily check that there is indeed no nonzero
antiderivations on $\mathcal{G}_{\rm SPA}$.
\end{example}

\begin{example}\label{xxsec3.9}
Let $\mathbb{K}$ be a field and $A$ be an associative algebra over
$\mathbb{K}$. Let $G$ be a group and $A*G$ be the skew group algebra
over $\mathbb{K}$. Suppose that $A^G$ is the fixed ring of the
action $G$ on $A$. We now revisit the generalized matrix algebra
$$
\mathcal{G}_{\rm GA}=\left[
\begin{array}
[c]{cc}%
A^G & M\\
N & A*G\\
\end{array}
\right]
$$
in Example \ref{xxsec2.2}, where $M=_{A^G}A_{A*G}$ and
$N=_{A*G}A_{A^G}$. For an arbitrary element $n\in N$, we define
$$
n^\bot=\left\{m\in M|\Psi_{NM}(n, m)=0\right\}.
$$
Similarly, for an arbitrary element $m\in M$, we define
$$
m^\bot=\left\{n\in N|\Psi_{NM}(n, m)=0\right\}.
$$
Then $n^\bot$ is a $G$-invariant right ideal of $A$ contained in
$r_A(n)$, where $r_A(n)$ is the right annihilator of $n$ in $A$.
Indeed, let $m\in n^\bot$ and $g\in G$, then $\Psi_{NM}(n,
m^g)=\Psi_{NM}(n, m\cdot g)=\Psi_{NM}(n, m)g=0$. Hence $n^\bot$  is
$G$-invariant, the rest is obvious. Similarly, we can show that
$m^\bot$ is a $G$-invariant left ideal of $A$ contained in $l_A(m)$,
where $l_A(m)$ is the left annihilator of $m$ in $A$.

In particular, if $A$ is a semiprime $\mathbb{K}$-algebra, then
$r_A(n)\neq A$ and $l_A(m)\neq A$. This shows that the bilinear form
$\Psi_{NM}$ is nondegenerate. Furthermore, if we assume that the
module $N$ is faithful as a left $A*G$-module, then the bilinear
form $\Phi_{MN}$ will be also nondegenerate. Indeed, let
$\Phi_{MN}(m, N)=0$ for some $m\in M$. Then, $0=N\cdot \Phi_{MN}(m,
N)=\Psi_{NM}(N, m)\cdot N$. By faithfulness and nondegeneracy of
$\Psi_{NM}$ we deduce that $m=0$. If one of the bilinear pairings
$\Phi_{MN}$ and $\Psi_{NM}$ is nondegenerate, then there is no
nonzero antiderivations on $\mathcal{G}_{\rm GA}$, which is similar
to Example \ref{xxsec3.8}.

In order to ensure the semiprimeness of the $\mathbb{K}$-algebra $A$
and the nondegeneracy of the bilinear forms $\Phi_{MN}$ and
$\Psi_{NM}$, $A$ may be one of the following algebras:
\begin{enumerate}
\item[(1)] the quantized enveloping algebra $U_q(
\mathfrak{sl}_2(\mathbb{K}))$ over the field $\mathbb{K}$,

\item[(2)] the quantum $n\times n$ matrix algebra
$\mathcal{O}_q(M_n(\mathbb{K}))$ over the field $\mathbb{K}$,

\item[(3)] the quantum affine $n$-space $\mathcal{O}_q(\mathbb{K}^n)$ over the field $\mathbb{K}$,

\item[(4)] the double affine Hecke algebra $\widetilde{H}$ over the field $\mathbb{K}$.

\item[(5)] the Iwasawa algebra $\Omega_G$ over the finite field
$\mathbb{F}_p$.
\end{enumerate}
\end{example}

In view of Proposition \ref{xxsec3.6}, Example \ref{xxsec3.8} and
Example \ref{xxsec3.9} we immediately have

\begin{proposition}\label{xxsec3.10}
Let $\mathcal {G}$ be a generalized matrix algebra over the
commutative ring $\mathcal{R}$ and $\Theta_{\rm antid}$ be an
$\mathcal {R}$-linear map from $\mathcal{G}$ into itself. If one of
the bilinear forms $\Phi_{MN}: M\underset {B}{\otimes}
N\longrightarrow A$ and $\Psi_{NM}: N\underset {A}{\otimes}
M\longrightarrow B$ is nondegenerate, then $\Theta_{\rm antid}$ is
an antiderivation of $\mathcal {G}$ if and only if $\Theta_{\rm
antid}=0$.
\end{proposition}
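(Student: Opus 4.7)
The plan is to combine the structural description of antiderivations from Proposition \ref{xxsec3.6} with the nondegeneracy hypothesis to force every component of $\Theta_{\rm antid}$ to vanish. The converse is trivial since the zero map is obviously an antiderivation.

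First I would invoke Proposition \ref{xxsec3.6} to write any antiderivation in the normal form $(\bigstar 4)$, specified by an element $m_0 \in M$, an element $n_0 \in N$, and two $\mathcal{R}$-linear maps $\tau_3 : N \to M$ and $\nu_2 : M \to N$ satisfying conditions (1)--(4). The goal then reduces to showing $m_0 = 0$, $n_0 = 0$, $\tau_3 = 0$ and $\nu_2 = 0$; since the diagonal blocks of $(\bigstar 4)$ are zero, this yields $\Theta_{\rm antid} = 0$.

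Assume first that $\Phi_{MN}$ is nondegenerate. Condition (2) gives $m_0 n = \Phi_{MN}(m_0, n) = 0$ for all $n \in N$, so $\Phi_{MN}(m_0, N) = 0$ and nondegeneracy forces $m_0 = 0$; the relation $m n_0 = \Phi_{MN}(m, n_0) = 0$ for all $m \in M$ likewise gives $n_0 = 0$. Next, condition (3) supplies $\tau_3(n) n' = \Phi_{MN}(\tau_3(n), n') = 0$ for all $n' \in N$, and therefore $\tau_3(n) = 0$ for each $n \in N$, i.e.\ $\tau_3 = 0$. Analogously, condition (4) gives $m \nu_2(m') = \Phi_{MN}(m, \nu_2(m')) = 0$ for every $m \in M$, so $\nu_2(m') = 0$ for each $m' \in M$. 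The case in which $\Psi_{NM}$ is nondegenerate is handled by the mirror argument, using the relations $n m_0 = \Psi_{NM}(n, m_0) = 0$, $n_0 m = \Psi_{NM}(n_0, m) = 0$, $n \tau_3(n') = \Psi_{NM}(n, \tau_3(n')) = 0$ and $\nu_2(m) m' = \Psi_{NM}(\nu_2(m), m') = 0$ coming from the same four conditions of Proposition \ref{xxsec3.6}.

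There is essentially no obstacle: the heavy lifting was already done in Proposition \ref{xxsec3.6}, which encoded the antiderivation equations into a handful of bimodule identities. The conclusion of Proposition \ref{xxsec3.10} is then just a direct reading of those identities through the definition of nondegeneracy. The only thing to check carefully is that the four annihilation conditions split cleanly into two groups, one accessible from $\Phi_{MN}$ alone and the other from $\Psi_{NM}$ alone, so that the hypothesis on a single pairing suffices to kill all four data $m_0, n_0, \tau_3, \nu_2$.
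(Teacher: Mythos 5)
Your proposal is correct and is exactly the argument the paper intends: the paper gives no written proof, stating that the result follows ``immediately'' from Proposition \ref{xxsec3.6}, and your reading of conditions (2)--(4) of that proposition through the definition of nondegeneracy (interpreting $m_0n$ as $\Phi_{MN}(m_0,n)$, $nm_0$ as $\Psi_{NM}(n,m_0)$, etc.) is precisely that deduction. Your observation that each of $m_0$, $n_0$, $\tau_3$, $\nu_2$ is annihilated by identities accessible from either pairing alone is the key point, and it is handled correctly.
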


We will end this section by investigating properties of Jordan
derivations of generalized matrix algebras with zero bilinear
pairings. Such kind of generalized matrix algebras draw our
attention, which is due to Haghany's work and Example
\ref{xxsec3.5}. Haghany in \cite{Haghany} studied hopficity and
co-hopficity for generalized matrix algebras with zero bilinear
parings. As you see in Example \ref{xxsec3.5}, those generalized
matrix algebras exactly have zero bilinear pairings.

\begin{theorem}\label{xxsec3.11}
Let $\mathcal{G}$ be a $2$-torsion free generalized matrix algebra
over the commutative ring $\mathcal{R}$. If the bilinear pairings
$\Phi_{MN}$ and $\Psi_{NM}$ are both zero, then every Jordan
derivation of $\mathcal{G}$ can be expressed as the sum of a
derivation and an antiderivation.
\end{theorem}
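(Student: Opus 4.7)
The plan is to apply Corollary \ref{xxsec3.3} to a given Jordan derivation $\Theta_{\rm Jord}$ and then exhibit an explicit splitting $\Theta_{\rm Jord}=\Theta_{\rm d}+\Theta_{\rm antid}$ in which the data produced by that corollary distribute themselves in the most natural way: the maps $\delta_1,\tau_2,\nu_3,\mu_4$ together with the elements $m_0,n_0$ become the ingredients of a derivation of the form $(\bigstar 1)$, while the ``off-diagonal twists'' $\tau_3$ and $\nu_2$ together with \emph{zero} constants become the ingredients of an antiderivation of the form $(\bigstar 4)$.

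First I would exploit the hypothesis $\Phi_{MN}=\Psi_{NM}=0$ to record that $mn=0$ in $A$ and $nm=0$ in $B$ for every $m\in M,\,n\in N$. Consequently all the ``pairing terms'' $mn_0,\,m_0n,\,n_0m,\,nm_0$ appearing in the expression $(\bigstar 3)$ of Corollary \ref{xxsec3.3} vanish, so the two diagonal entries of $\Theta_{\rm Jord}$ simplify to $\delta_1(a)$ and $\mu_4(b)$, and several of the identities listed in Corollary \ref{xxsec3.3}(1)--(2) collapse to $0=0$.

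I would then define $\Theta_{\rm d}$ by formula $(\bigstar 1)$ using the maps $\delta_1,\tau_2,\nu_3,\mu_4$ and the elements $m_0,n_0$ supplied by Corollary \ref{xxsec3.3}. The assertion that $\delta_1$ (resp.\ $\mu_4$) is an honest derivation of $A$ (resp.\ $B$) is already part of Corollary \ref{xxsec3.3}, and the remaining conditions (1)--(4) of Proposition \ref{xxsec3.1} follow immediately from conditions (1)--(4) of Corollary \ref{xxsec3.3}, since the right-hand sides $\tau_2(m)n+m\nu_3(n)$ and $n\tau_2(m)+\nu_3(n)m$ both vanish under $\Phi_{MN}=\Psi_{NM}=0$. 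In parallel, I would define $\Theta_{\rm antid}$ by formula $(\bigstar 4)$ with $\tau_3,\nu_2$ from Corollary \ref{xxsec3.3} and with the constants $m_0,n_0$ \emph{both set to zero}. Conditions (1) and (2) of Proposition \ref{xxsec3.6} are then trivially satisfied, and conditions (3),(4) follow from Corollary \ref{xxsec3.3}(5),(6), after the observation that the ``stronger'' cross identities $n\tau_3(n')=0$, $\tau_3(n)n'=0$, $m\nu_2(m')=0$, $\nu_2(m)m'=0$ for arbitrary $m'\in M,\,n'\in N$ hold automatically in the presence of vanishing pairings.

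Finally, a direct entry-by-entry comparison confirms $\Theta_{\rm d}+\Theta_{\rm antid}=\Theta_{\rm Jord}$: the derivation part contributes the diagonal $\delta_1(a),\mu_4(b)$ together with $am_0-m_0b+\tau_2(m)$ in the $(1,2)$ entry and $n_0a-bn_0+\nu_3(n)$ in the $(2,1)$ entry, and the antiderivation part supplies exactly the residual summands $\tau_3(n)$ and $\nu_2(m)$. The $2$-torsion-freeness hypothesis enters only through Corollary \ref{xxsec3.3} itself (to eliminate the auxiliary pieces $\delta_4,\mu_1$ coming from Proposition \ref{xxsec3.2}(3)). The only mildly delicate step is bookkeeping: one must verify that every axiom required by Propositions \ref{xxsec3.1} and \ref{xxsec3.6} for the two pieces is either automatic (thanks to the vanishing pairings or to the trivial choice of the constants in $(\bigstar 4)$) or is an immediate consequence of the conditions already guaranteed by Corollary \ref{xxsec3.3}; no new identity needs to be established.
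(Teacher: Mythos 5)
Your proposal is correct and follows essentially the same route as the paper: apply Corollary \ref{xxsec3.3} to put $\Theta_{\rm Jord}$ in the form $(\bigstar3)$, assign $\delta_1,\tau_2,\nu_3,\mu_4,m_0,n_0$ to a derivation of type $(\bigstar1)$ and $\tau_3,\nu_2$ (with zero constants) to an antiderivation of type $(\bigstar4)$, and check the required conditions via Propositions \ref{xxsec3.1} and \ref{xxsec3.6}. Your verification is in fact more explicit than the paper's, which leaves the condition-checking implicit; in particular your observation that the cross identities $n\tau_3(n')=0$, $\tau_3(n)n'=0$, $m\nu_2(m')=0$, $\nu_2(m)m'=0$ hold automatically when $\Phi_{MN}=\Psi_{NM}=0$ is exactly the point needed to invoke Proposition \ref{xxsec3.6}.
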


\begin{proof}
Let $\Theta_{\rm Jord}$ be a Jordan derivation of $\mathcal{G}$. By
Corollary \ref{xxsec3.3} we know that $\Theta_{\rm Jord}$ is of the
form
$$
\begin{aligned}
& \Theta_{\rm Jord}\left(\left[
\begin{array}
[c]{cc}%
a & m\\
n & b\\
\end{array}
\right]\right)\\ =& \left[
\begin{array}
[c]{cc}%
\delta_1(a)-mn_0-m_0n & am_0-m_0b+\tau_2(m)+\tau_3(n)\\
n_0a-bn_0+\nu_2(m)+\nu_3(n) & n_0m+nm_0+\mu_4(b)\\
\end{array}
\right] , (\bigstar3)\\
& \forall \left[
\begin{array}
[c]{cc}%
a & m\\
n & b\\
\end{array}
\right]\in \mathcal{G}.
\end{aligned}
$$
It follows from Proposition \ref{xxsec3.1} and Proposition
\ref{xxsec3.6} that there exist a derivation $\Theta_{\rm d}^\prime$
and an antiderivation $\Theta_{\rm antid}^\prime$ such that
$$
\begin{aligned}
\Theta_{\rm Jord}\left(\left[
\begin{array}
[c]{cc}%
a & m\\
n & b\\
\end{array}
\right]\right) & = \left[
\begin{array}
[c]{cc}%
\delta_1(a)-mn_0-m_0n & am_0-m_0b+\tau_2(m)\\
n_0a-bn_0+\nu_3(n) & n_0m+nm_0+\mu_4(b)\\
\end{array}
\right]\\
& +\left[
\begin{array}
[c]{cc}%
0 & \tau_3(n)\\
\nu_2(m) & 0\\
\end{array}
\right]\\
&=\Theta_{\rm d}^\prime\left( \left[
\begin{array}
[c]{cc}%
a & m\\
n & b\\
\end{array}
\right]\right)+\Theta_{\rm antid}^\prime\left(\left[
\begin{array}
[c]{cc}%
a & m\\
n & b\\
\end{array}
\right]\right)
\end{aligned}
$$
for all $
\left[\smallmatrix a & m\\
n & b \endsmallmatrix \right]\in \mathcal{G}$. This shows that
$\Theta_{\rm Jord}$ can be expressed the sum of a derivation
$\Theta_{\rm d}^\prime$ and an antiderivation $\Theta_{\rm
antid}^\prime$, which is the desired result.
\end{proof}

\begin{example}\label{xxsec3.12}
The Jordan derivation $\Gamma_{\rm Jord}$ constructed in Example
\ref{xxsec3.5} can be expressed as the sum of a derivation
$\Theta_1$ and an antiderivation $\Theta_2$.
\end{example}

\begin{example}\label{xxsec3.13}
Let $\mathcal{R}$ be a $2$-torsion free commutative ring with
identity and $T_n(\mathcal{R})(n\geq 2)$ be the upper (or lower)
triangular matrix algebra over $\mathcal{R}$. Clearly,
$T_n(\mathcal{R})(n\geq 2)$ is a generalized matrix algebra with
zero pairings. In view of Theorem \ref{xxsec3.11}, every Jordan
derivation on $T_n(\mathcal{R})(n\geq 2)$ can be written as the sum
of a derivation and an antiderivation. By \cite[Corollary
1.2]{Benkovic} we assert that the part of antiderivation is zero.
This leads to the fact that every Jordan derivation on
$T_n(\mathcal{R})(n\geq 2)$ is a derivation \cite{ZhangYu}.
\end{example}


\begin{thebibliography}{}

\bibitem[1]{Benkovic} D. Benkovi$\check{\rm c}$, {\em Jordan derivations
and antiderivations on triangular matrices}, Linear Algebra Appl.,
\textbf{397} (2005), 235-244.

\bibitem[2]{BobocDascalescuvanWyk} C. Boboc, S. Dascalescu and L. van Wyk,
{\em Isomorphisms between Morita context rings}, Linear and
Multilinear Algebra, to appear.

\bibitem[3]{Bresar1} M. Bre$\check{\rm s}$ar, {\em Jordan derivations
on semiprime rings}, Proc. Amer. Math. Soc., \textbf{104} (1988),
1003-1006.

\bibitem[4]{Bresar2} M. Bre$\check{\rm s}$ar, {\em Jordan mappings of
semiprime rings}, J. Algebra, \textbf{127} (1989), 218-228.

\bibitem[5]{Bresar3} M. Bre$\check{\rm s}$ar, {\em Commuting traces of
biadditive mappings, commutativity-preserving mappings and Lie
mappings}, Trans. Amer. Math. Soc., \textbf{335} (1993), 525-546.

\bibitem[6]{Bresar4} M. Bre$\check{\rm s}$ar, {\em Jordan derivations
revisited}, Math. Proc. Cambridge Philos. Soc., \textbf{139} (2005),
411-425.

\bibitem[7]{CohenFischmanMontgomery} M. Cohen, D. Fischman and S. Montgomery,
{\em Hopf Galosi extensions, smash products, and Morita
equivalence}, J. Algebra, \textbf{133} (1990), 351-372.

\bibitem[8]{Haghany} A. Haghany, {\em Hopficity and co-hopficity for morita contexts},
Comm. Algebra, \textbf{27} (1999), 477-492.

\bibitem[9]{HanWei} D. Han and F. Wei, {\em Jordan $(\alpha, \beta)$-derivation
on triangular algebras and related mappings}, Linear Algebra Appl.,
\textbf{434} (2011), 259-284.

\bibitem[10]{Herstein} I. N. Herstein, {\em Jordan derivations of prime rings},
Proc. Amer. Math. Soc., \textbf{8} (1957), 1104-1110.

\bibitem[11]{HouQi} J. -C. Hou and X. -F. Qi, {\em Generalized Jordan
derivation on nest algebras}, Linear Algebra Appl., \textbf{430}
(2009), 1479-1485.

\bibitem[12]{Johnson} B. E. Johnson, {\em Symmetric amenability and the
nonexistence of Lie and Jordan derivations}, Math. Proc. Cambridge
Philos. Soc., \textbf{120} (1996), 455-473.

\bibitem[13]{LiBenkovic} Y. -B. Li and D. Benkovi\v{c}, {\em
Jordan generalized derivations on triangular algebras}, Linear and
Multilinear Algebra, \textbf{59} (2011), 841-849.

\bibitem[14]{LiXiao} Y. -B. Li and Zhankui Xiao, {\em
Additivity of maps on generalized matrix algebras}, Elect. J. Linear
Algebra, \textbf{22} (2011), 743-757.

\bibitem[15]{LiWei} Y. -B. Li and F. Wei, {\em Semi-centralizing maps
of generalized matrix algebras}, Linear Algebra Appl., in press,
doi:10.1016/j.laa.2011.07.014.

\bibitem[16]{MaJi} F. Ma and G. -X. Ji, {\em Generalized Jordan
derivations on triangular matrix algebras}, Linear Multilinear
Algebra, \textbf{55} (2007), 355-363.

\bibitem[17]{Montgomery} S. Montgomery, {\em Hopf algebras and their
actions on rings}, CBMS Regional Conference Series in Mathematics
\textbf{82}, AMS, Providence, R.I., 1993.

\bibitem[18]{Morita} K. Morita, {\em Duality for modules and its
applications to the theory of rings with minimum condition}, Sci.
Rep. Tokyo Kyoiku Diagaku Sect., \textbf{A6} (1958), 83-142.

\bibitem[19]{XiaoWei1} Z. -K. Xiao and F. Wei, {\em Jordan higher
derivations on triangular algebras}, Linear Algebra Appl.,
\textbf{432} (2010), 2615-2622.

\bibitem[20]{XiaoWei2} Z. -K. Xiao and F. Wei, {\em Commuting mappings of
generalized matrix algebras}, Linear Algebra Appl., \textbf{433}
(2010), 2178-2197.

\bibitem[21]{Zhang} J. -H. Zhang, {\em Jordan derivations on nest algebras},
Acta Math. Sinica (Chin. Ser.), \textbf{41} (1998), 205-212.

\bibitem[22]{ZhangYu} J. -H. Zhang and W. -Y. Yu, {\em Jordan derivations of
triangular algebras}, Linear Algebra Appl., \textbf{419} (2006),
251-255.

\end{thebibliography}
\end{document}